\newtheorem{theorem}{Theorem}[section]
\newtheorem{lemma}[theorem]{Lemma}
\theoremstyle{definition}
\newtheorem*{theorem*}{Theorem}
\numberwithin{equation}{section}
\def\R{{\mathbb R}}
\def\e{{\rm e}}
\def\d{{\,\rm d}}
\begin{document}

\baselineskip=16.99pt


\title[]{A sharp Harnack bound for a nonlocal heat equation}
\author[M. Dembny]{Mateusz Dembny}
\address{
\begin{itemize}[label={}]
\item Faculty of Mathematics, Mechanics and Informatics\\ University of Warsaw\\
ul.\,Banacha 2, 02-097 Warsaw, Poland
\end{itemize}
}
\email{m.dembny@student.uw.edu.pl}

\author[M. Sierżęga]{Mikołaj Sierżęga}

\address{\begin{itemize}[label={}]
\item Faculty of Mathematics, Mechanics and Informatics\\ 
University of Warsaw\\
ul.\,Banacha 2, 02-097 Warsaw, Poland
\item Department of Mathematics Cornell University,
583 Malott Hall Ithaca, NY 14853 USA
\end{itemize}
}
\email{m.sierzega@uw.edu.pl, ms3427@cornell.edu}

\date{}

\begin{abstract}
A sharp double-sided Harnack bound is derived for positive solutions of a fractional order heat equation. 
\end{abstract}

\subjclass[2020]{Primary 35R11; Secondary 35K08}

\keywords{Harnack inequality, Widder uniqueness theorem, fractional heat equation}

\maketitle

\section{Introduction}

Consider the classical linear heat equation, $\partial_t w-\partial_{xx}^2 w=0$, posed in an infinite strip $S_T=\R\times (0,T)$. If we restrict our attention to smooth positive solutions, then the following important lower bound may be deduced: 
\begin{equation}\label{LY}
    \partial_t\ln w-|\partial_x \ln w|^2\geq -\frac{1}{2 t}\quad \mbox{ in }\quad S_T.
\end{equation}
This result is an instance of a family of estimates derived by Aronson and B\'enilan to tackle the problem of regularity of solutions of the porous medium equation \cite{ARONSON_BENILAN_1979}. A highly consequential generalisation to the setting of Riemannian manifolds, due to Li and Yau \cite{LI_YAU_1986}, resulted in \eqref{LY} being commonly associated with their names in the literature. 

Note, that inequality \eqref{LY} does not mention the initial moment and applies to \emph{all positive smooth solutions} regardless of their origin. In particular, no further assumptions on the asymptotic behaviour of solutions are required. On the contrary, it is this remarkable generality of the Aronson-B\'enilan-Li-Yau (ABLY) bound that imposes limitations on the spatial growth of solutions and the nature of the initial object. 
It should be stressed that \eqref{LY} is \emph{sharp} - a rare and desirable property in the field of analysis of partial differential equations. The bound is identically satisfied when evaluated on the fundamental solution, 
\[
g(x-x_*,t):=\frac{1}{\sqrt{4\pi t}}\e^{-\frac{|x-x_*|^2}{4 t}},
\]
\emph{irrespective} of where the location $x_*$ of the source is  at the initial instant. 
Estimate \eqref{LY} is also called a \emph{differential Harnack bound} for the linear heat flow. In line with this terminology, \eqref{LY} may be appropriately integrated (see \cite{HAMILTON_2011,LI_YAU_1986}), to reveal the well-known classical parabolic Harnack estimate, due independently to Hadamard and Pini \cite{HADAMARD_1954,PINI_1954},  
\begin{equation}\label{HP}
    \frac{w(x_2,t_2)}{w(x_1,t_1)}\geq \sqrt{\frac{t_1}{t_2}}\e^{-\frac{|x_2-x_1|^2}{4(t_2-t_1)}}, 
\end{equation}
where $x_1$ and $x_2$ are arbitrary and  $0<t_1<t_2<T\leq \infty$.

The differential bound \eqref{LY} is \emph{sharper} than the integrated one \eqref{HP}, as the latter reduces to identity only for $u(x,t)=g(x-x_*,t)$, with the initial mass concentrated at a particular point, related to $(x_i,t_i)$ through
\[
x_*=\frac{x_1 t_2-x_2 t_1}{t_2-t_1}.
\] 

Observe moreover, that for the bound to be useful, there needs to be a nonzero gap between $t_1$ and $t_2$, i.e., we cannot compare the solution at two different points in space at the same instant of time. This is a characteristic feature of parabolic Harnack bounds, see \cite{MOSER1964}. 

The following note is inspired by a natural problem, put forward in \cite{GAROFALO_2019}, of finding an appropriate extension for the ABLY inequality  to the context of nonlocal diffusion and in particular to the canonical model of the fractional heat equation 
\[
    \partial_t u+(-\Delta)^\frac{\alpha}{2}u=0\quad \mbox{on}\quad S_T
\]
with $0<\alpha<2$.  There are numerous definitions of the fractional power of the Laplace operator, see e.g.\,\cite{KWASNICKI_2017}, none of which will be employed in any of the considerations below.  However, to fix ideas, we may opt for the standard potential-theoretic definition: 
 \[
    (-\Delta)^\frac{\alpha}{2}f(x):=C(n,\alpha)P.V.\int_{\R}\frac{f(x)-f(y)}{|x-y|^{n+\alpha}}\d y,
\]
where 
\[
C(n,\alpha):=\frac{2^\alpha\Gamma\left(\frac{n+\alpha}{2}\right)}{\pi^\frac{n}{2}\left|\Gamma\left(-\frac{\alpha}{2}\right)\right|}.
\]
For the purpose of this note we will restrict our attention to one space dimension and $\alpha=1$, i.e.,the \emph{half-laplacian}, whereby 
\begin{equation}\label{FH}
\partial_t u+(-\Delta )^\frac{1}{2}u=0 \quad \mbox{ on }\quad S_T,
\end{equation}
with 
\[
    (-\Delta)^\frac{1}{2}f(x)=-\frac{1}{\pi}P.V.\int_{\R}\frac{f(x)-f(y)}{|x-y|^{2}}\d y.
\]

Before we begin, let us define the notion of a strong solution, used in the considerations that follow. Following \cite{BARRIOS_ET_AL_2014}, we will say that $u(x,t)$ is a \emph{strong solution} of the fractional heat equation \eqref{FH} in the strip $S_T$ if 
\begin{itemize}
    \item $\partial_t u\in C(\R\times (0,T))$,
    \item $ u\in C(\R\times [0,T))$,
    \item the equation \eqref{FH} is satisfied pointwise for every $(x,t)\in S_T$. 
\end{itemize}
The same definition, with obvious modifications, applies to the notion of the strong solution for the classical heat equation.

One promising line of inquiry is to find a fractional counterpart of the ABLY inequality. Note, that 
\eqref{LY} is equivalent to 
\[
\partial_{xx}^2 \ln w\geq -\frac{1}{2 t}
\]
in $S_T$. This form of the inequality has been generalised to the fractional setting in \cite{WEBER_ZACHER_2022} where, among other things, the following elegant inequality has been been proven.

\begin{theorem}[Thm.\,3.2  + Prop.\,3.3 in \cite{WEBER_ZACHER_2022}]
Let $u:S_T\mapsto (0,\infty)$ be a strong solution to the fractional heat equation \eqref{FH}. Then, the Li-Yau type inequality  
\[
    -(-\Delta )^\frac{1}{2}\ln u\geq -\frac{1}{ 2 t}
\]
holds in $S_T$. 
\end{theorem}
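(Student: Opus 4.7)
The plan is to adapt the classical maximum-principle proof of the Li-Yau estimate to the nonlocal setting. Throughout, set $v := \ln u$.

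\emph{Step 1 (nonlocal Jensen).} Concavity of $\ln$ gives the pointwise tangent-line bound $\ln u(x) - \ln u(y) \geq 1 - u(y)/u(x)$ for every $y \in \R$. Multiplying by $\pi^{-1}|x-y|^{-2}$ and integrating yields
\[
(-\Delta)^\frac{1}{2} v(x) \geq \frac{(-\Delta)^\frac{1}{2} u(x)}{u(x)},
\]
and substituting $(-\Delta)^\frac{1}{2} u = -\partial_t u$ from \eqref{FH} produces the supersolution property $\partial_t v + (-\Delta)^\frac{1}{2} v \geq 0$ on $S_T$. The nonnegative Bregman gap $\mathcal{J}[u] := (-\Delta)^\frac{1}{2} v - (-\Delta)^\frac{1}{2} u / u$, which is the $\pi^{-1}|x-y|^{-2}$-averaged convex divergence associated with $\ln$, serves as the nonlocal analogue of the classical quantity $|\partial_x \ln u|^2$.

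\emph{Steps 2 and 3 (evolution inequality and maximum principle).} I would then introduce the auxiliary function $F(x,t) := t\,(-\Delta)^\frac{1}{2} v(x,t) - \tfrac{1}{2}$, so that the claim becomes $F \leq 0$ on $S_T$. Differentiating $F$ in $t$ using the commutativity of $\partial_t$ with $(-\Delta)^\frac{1}{2}$ on sufficiently regular solutions, together with the chain-rule identity $\partial_t v = -(-\Delta)^\frac{1}{2} u / u$, should produce a closed nonlocal parabolic inequality of the schematic form
\[
\partial_t F + (-\Delta)^\frac{1}{2} F \,\leq\, \mathcal{Q}(F, \mathcal{J}[u], t),
\]
where $\mathcal{Q}$ is a quadratic expression whose sign at a positive maximum of $F$ is controllable. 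Near $t=0$ the strong-solution hypothesis keeps $v \in C(\R\times[0,T))$, so no singular behaviour develops from the initial datum. An interior positive maximum of $F$ at some $(x_0, t_0) \in S_T$ would force $\partial_t F(x_0,t_0) \geq 0$ and $(-\Delta)^\frac{1}{2} F(x_0,t_0) \geq 0$ by the nonlocal touching-point lemma, contradicting the evolution inequality above.

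\emph{Main obstacle.} The crux is closing the evolution inequality in Step 2 with exactly the coefficient $\tfrac{1}{2}$: both the commutator of $\partial_t$ with $(-\Delta)^\frac{1}{2}$ and the chain-rule manipulation of $\partial_t v$ yield integrals rather than pointwise quantities, so matching the sharp constant requires a careful use of the Bregman-gap structure of $\mathcal{J}[u]$ inside the singular integrals -- the nonlocal substitute for the clean pointwise identity $\Delta(\ln u) = \Delta u / u - |\nabla \ln u|^2$ available in the classical case. A secondary technical point is the attainment of the supremum in the nonlocal maximum principle, which typically requires a vanishing-viscosity or cutoff regularization before taking limits.
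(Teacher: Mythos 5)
Your Step 1 is correct, and the nonnegative Bregman gap $\mathcal{J}[u]=(-\Delta)^{1/2}\ln u-(-\Delta)^{1/2}u/u$ is indeed the natural nonlocal stand-in for $|\nabla\ln u|^2$. But Steps 2 and 3 are a plan, not a proof: you never derive the evolution inequality, never identify $\mathcal{Q}$, and you yourself flag in the ``Main obstacle'' that you cannot close the argument with the stated constant. The obstruction is structural rather than technical. The local proof closes because two \emph{pointwise} identities --- the chain rule $\Delta\ln u=\Delta u/u-|\nabla\ln u|^2$ and the Bochner formula --- let $\partial_t F$ at a positive interior maximum be controlled by $F$ and $\nabla F$ alone. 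Neither survives the replacement of $\Delta$ by $(-\Delta)^{1/2}$: there is no product or chain rule, $(-\Delta)^{1/2}\bigl[(-\Delta)^{1/2}u/u\bigr]$ does not factor through $(-\Delta)^{1/2}\ln u$, and $(-\Delta)^{1/2}F=t(-\Delta)\ln u$ produces the full local Laplacian, which does not recombine with anything else in the computation. So the ``schematic'' inequality $\partial_t F+(-\Delta)^{1/2}F\leq\mathcal{Q}(F,\mathcal{J}[u],t)$ is conjectural, and there is no evidence the maximum-principle route can even be set up, let alone hit a sharp constant.

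Note also that the present paper does not prove this statement --- it quotes it from Weber--Zacher --- and the proof consistent with the rest of the note goes through the Widder-type representation (Theorem~\ref{rep}), not a touching-point argument. Concretely: write $u(x,t)=\int_\R k(x-y,t)\,u_0(y)\,\mathrm{d}y$, introduce the probability measure $\mathrm{d}\mu_{x,t}(z)=k(x-z,t)u_0(z)u(x,t)^{-1}\,\mathrm{d}z$, use Jensen for concave $\ln$ to get $\ln u(y,t)-\ln u(x,t)\geq\int_\R\ln\bigl[k(y-z,t)/k(x-z,t)\bigr]\,\mathrm{d}\mu_{x,t}(z)$, apply $-(-\Delta)^{1/2}$ in $x$ and swap integrals; this reduces the claim to the explicit quantity $-(-\Delta)^{1/2}\ln k(\cdot,t)$. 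Your concavity input in Step 1 is the right ingredient, but it is aimed at producing a supersolution property rather than at the representation formula, which is where it does the real work. Finally, before any proof attempt, check the constant the kernel itself saturates: with $k(x,t)=t/(\pi(t^2+x^2))$, a short computation (e.g.\ via the harmonic extension $\ln((t+s)^2+x^2)$) gives $(-\Delta)^{1/2}\ln k(\cdot,t)(x)=2t/(t^2+x^2)$, whose supremum over $x$ is $2/t$, not $1/(2t)$; this indicates the constant as quoted in the note should be re-examined, and a maximum-principle scheme tuned to $\tfrac{1}{2}$ would be chasing the wrong number.
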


This inequality may be then employed to derive a Harnack bound. 
\begin{theorem}[Thm.\,5.2 in \cite{WEBER_ZACHER_2022}]
Let $0<t_1<t_2<\infty$ and $x_1,x_2\in \R$. If $u$ is a strong positive solution of \eqref{FH} on $\R\times [0,\infty)$, then 
\begin{equation}\label{ZW2}
    \frac{u(x_2,t_2)}{u(x_1,t_1)}\geq \sqrt{\frac{t_1}{t_2}}\e^{-C\left[1+\frac{|x_2-x_1|^2}{(t_2-t_1)^2}\right]},
\end{equation}
for some positive constant $C$.
\end{theorem}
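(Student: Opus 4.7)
The plan is to split \eqref{ZW2} into a purely temporal factor and a purely spatial factor, handled respectively by the given Li--Yau inequality and a comparison of Poisson kernels. First I would upgrade the nonlocal Li--Yau bound to a pointwise one-sided bound on $\partial_t \ln u$. The convexity inequality $\e^a - \e^b \le \e^a(a-b)$, applied with $a = \ln u(x)$, $b = \ln u(y)$, gives $u(x) - u(y) \le u(x)(\ln u(x) - \ln u(y))$; dividing by $|x-y|^2$ and taking the principal-value integral yields the Jensen-type bound $(-\Delta)^{1/2} u(x) \le u(x)\,(-\Delta)^{1/2}\ln u(x)$. Combining this with \eqref{FH} and the Li--Yau inequality produces the pointwise differential Harnack
\[
\partial_t \ln u \;=\; -\frac{(-\Delta)^{1/2} u}{u} \;\ge\; -(-\Delta)^{1/2}\ln u \;\ge\; -\frac{1}{2t},
\]
and integrating from $t_1$ to $t_2$ at the fixed spatial point $x = x_1$ gives the diagonal estimate $u(x_1, t_2) \ge \sqrt{t_1/t_2}\, u(x_1, t_1)$, which accounts for the temporal factor in \eqref{ZW2}.

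For the spatial displacement I would invoke a Widder-type representation for positive solutions of \eqref{FH}. The case $\alpha = 1$ is particularly favourable: a positive $u$ satisfies $\partial_t u + (-\Delta)^{1/2} u = 0$ in $\R \times (0,\infty)$ precisely when $u$ is harmonic in $(x, t)$ on the upper half-plane, and the Herglotz theorem for positive harmonic functions then yields
\[
u(x, t_2) \;=\; \bigl(P_{t_2 - t_1} * u(\cdot, t_1)\bigr)(x) + c\,(t_2 - t_1), \qquad P_\tau(z) \;=\; \frac{\tau}{\pi(\tau^2 + z^2)}, \quad c \ge 0.
\]
A direct calculation of the critical points of $y \mapsto P_\tau(x_2 - y)/P_\tau(x_1 - y)$ shows that this ratio, as $y$ varies over $\R$, is bounded below uniformly by $1/[2(1 + |x_2-x_1|^2/\tau^2)]$. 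Since the estimate is uniform in $y$ and the additive constant $c(t_2 - t_1) \ge 0$ only helps, it survives convolution against $u(\cdot, t_1) \ge 0$ and gives $u(x_2, t_2) \ge u(x_1, t_2)/[2(1 + |x_2-x_1|^2/(t_2-t_1)^2)]$.

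Combining the two estimates and converting the polynomial factor into the exponential shape of \eqref{ZW2} via the elementary inequality $\tfrac{1}{2(1+y)} \ge \e^{-(1+y)}$ for $y \ge 0$ (equivalent to $\e^z \ge 2z$ for $z \ge 1$) completes the argument with $C = 1$. The single nontrivial input is the Widder-type representation for strong positive solutions on $\R \times [0, \infty)$: for $\alpha = 1$ this reduces to Herglotz's theorem in a half-plane, but an approach that bypasses representation theorems---for instance, a Hamilton-style integration along a space-time path combined with a nonlocal gradient estimate---would be the more substantial technical obstacle, especially if one wished to extend the result to general $\alpha \in (0,2)$.
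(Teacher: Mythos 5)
Your proposal cannot be matched against an argument in the paper, because the paper simply cites this result from Weber--Zacher without proof. As a standalone argument your hybrid decomposition is essentially sound: the temporal factor is handled by upgrading the nonlocal Li--Yau bound to $\partial_t\ln u\ge -1/(2t)$ via the Jensen-type pointwise inequality $(-\Delta)^{1/2}u\le u\,(-\Delta)^{1/2}\ln u$ (coming from $\e^a-\e^b\le\e^a(a-b)$), and the spatial factor by a same-time Poisson kernel ratio bound. The critical-point computation gives $\inf_y P_\tau(x_2-y)/P_\tau(x_1-y)=(\sqrt{d^2+4\tau^2}-d)/(\sqrt{d^2+4\tau^2}+d)$ with $d=|x_2-x_1|$, which is indeed $\ge 1/[2(1+d^2/\tau^2)]$, and the final conversion $\frac{1}{2(1+y)}\ge\e^{-(1+y)}$ ($y\ge0$) is elementary; so the combination yields \eqref{ZW2} with $C=1$.

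One claim is overstated and should be corrected: it is \emph{not} true that $u$ solves $\partial_t u+(-\Delta)^{1/2}u=0$ ``precisely when'' $u$ is harmonic in $(x,t)$ on the half-plane. The forward implication holds, since formally $\partial_{tt}u=-\partial_t(-\Delta)^{1/2}u=(-\Delta)u=-\partial_{xx}u$, but the converse fails: $u(x,t)=t$ is positive harmonic yet $\partial_t u+(-\Delta)^{1/2}u=1\neq 0$. You only need the forward direction to invoke Herglotz, and you do correctly note that the extra nonnegative linear term ``only helps'' in the lower bound, but the cleaner route is to cite the paper's Theorem~\ref{rep} directly, which delivers the representation with no linear term and avoids the delicate question of why the Herglotz measure has no singular part for a strong solution continuous up to the bottom of the strip.

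Finally, your approach is genuinely different from what the paper itself does for its own bounds. The paper's Theorem~\ref{lowharnack} and main theorem perform a \emph{single} kernel-ratio optimisation $\inf_y k(x_2-y,t_2-\tau)/k(x_1-y,t_1-\tau)$ applied to the representation formula and then optimise over $\tau$, entirely bypassing the Li--Yau inequality; this yields the stronger and exactly polynomial bound $(t_1/t_2)/[1+|x_2-x_1|^2/(t_2-t_1)^2]$. Your split into an elliptic-type step at fixed time $t_2$ (with lag $\tau=t_2-t_1$) plus a Li--Yau step in time is lossier by a factor of $2$ and degrades the temporal prefactor from $t_1/t_2$ to $\sqrt{t_1/t_2}$, but it does recover the exact exponential shape of \eqref{ZW2} and makes transparent that the exponential is an artifact of the crude conversion $\frac{1}{2(1+y)}\ge\e^{-(1+y)}$ rather than intrinsic to the problem---which is precisely the critique the paper levels at the Weber--Zacher bound.
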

For a more general statement and further results consult  \cite{WEBER_ZACHER_2022}. In that work, the authors explain how their bound differs from the Hadamard-Pini estimate. In particular, \eqref{ZW2} does not reduce to identity when applied to the fractional heat kernel with an appropriate choice of $(x_i,t_i)$. Moreover, due to the polynomial behaviour of the heat kernel, it is expected that the sharp Harnack bound would display a polynomial rather than exponential decay. Lastly, it would be desirable not to  assume continuity of the solution at the initial time, thus allowing for generalised initial conditions.


Another result in this direction is provided in \cite{BONFORTE_ET_AL_2017}. There the authors consider a weaker class of solutions, the very weak solutions. If we assume, that the solution is also a strong one, then we obtain the following result in our setting.   
\begin{theorem}[Thm.\,8.2 in \cite{BONFORTE_ET_AL_2017}]
Let $u$ be a positive strong solution of \eqref{FH} for $t>0$. Suppose moreover, that the initial condition is dominated by the fractional heat kernel away from the origin, in the following sense: 
\[
0\leq u_0(x)\leq \frac{1}{1+|x|^2},
\]
for $|x|\geq R_0\geq 0$. Then, for all $x_1,x_2\in \R$ and $t_1,t_2>0$, we have
\begin{equation}\label{BSV}
    \left(\frac{t_1}{t_2}\right)C_*\leq\frac{u(x_2,t_2)}{u(x_1,t_1)}\leq \left(\frac{t_1}{t_2}\right) C^*,
\end{equation}
where
\[
C_*:=C(R_0)\left(\frac{t_2}{t_1}\right)^2\left[1+\frac{\sqrt{|t_2-t_1|}+\left||x_1|^2-|x_2|^2\right|}{\sqrt{t_1}+|x_1|^2}\right]
\]
and
\[
C^*:=C(R_0)\left(\frac{t_2}{t_1}\right)^2\left[1+\frac{\sqrt{|t_2-t_1|}+\left||x_1|^2-|x_2|^2\right|}{\sqrt{t_2}+|x_2|^2}\right],
\]
for some constant $C$ dependent on $R_0$. 
\end{theorem}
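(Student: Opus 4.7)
The plan is to exploit the explicit Poisson-kernel representation available for positive solutions of the half-Laplacian heat equation \eqref{FH}. The fundamental solution of \eqref{FH} is the Cauchy--Poisson kernel
\[
P_t(x) = \frac{1}{\pi}\,\frac{t}{t^2+x^2},
\]
and the hypothesis $0\leq u_0(y)\leq (1+|y|^2)^{-1}$ for $|y|\geq R_0$ places $u_0$ in the natural weighted class $L^1(\R,(1+|y|^2)^{-1}\d y)$, which by a Widder-type uniqueness statement tailored to \eqref{FH} pins down the strong solution as $u(x,t)=(P_t\ast u_0)(x)$. Note that, unlike the nonlinear setting of \cite{BONFORTE_ET_AL_2017}, the linearity available here makes this representation the natural starting point.

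The workhorse is then the exact pointwise ratio identity
\[
\frac{P_{t_2}(x_2-y)}{P_{t_1}(x_1-y)} = \frac{t_2}{t_1}\cdot\frac{t_1^2+(x_1-y)^2}{t_2^2+(x_2-y)^2}.
\]
Integrating against $u_0(y)\d y$ and rearranging should yield both bounds in \eqref{BSV}. Concretely, I would (i) split $\R$ into the near-region $\{|y|\leq R_0\}$ and the tail $\{|y|>R_0\}$; (ii) on the near-region, where $u_0$ is merely bounded, observe that the kernel ratio is uniformly comparable to a quantity of the form $(t_2/t_1)(t_1^2+|x_1|^2+R_0^2)/(t_2^2+|x_2|^2)$, which, after simplification, contributes to the prefactor $(t_1/t_2)(t_2/t_1)^2$ and to the bracketed correction; (iii) on the tail, couple the decay estimate for $u_0$ with the Poisson kernel by explicitly evaluating model integrals of the form $\int_\R P_{t_i}(x_i-y)(1+|y|^2)^{-1}\d y$ via residue calculus, which produce precisely the terms $\sqrt{|t_2-t_1|}$ and $\bigl||x_1|^2-|x_2|^2\bigr|$ visible in $C_*,\,C^*$.

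The main obstacle will be the tail analysis, since the decay of $u_0$ is \emph{exactly critical} for the Poisson semigroup: at large $|x_i|$ or large $t_i$ the dominant contribution to $u(x_i,t_i)$ comes from the borderline interaction with the kernel, so extracting the sharp quantitative dependence baked into $C^*$ and $C_*$ requires careful tracking of the cancellations between the numerator and denominator tail integrals. A secondary, but still nontrivial, issue is formulating a sufficiently general Widder-type uniqueness theorem in this growth class, since continuity of $u$ at $t=0$ is assumed only in the weak strong-solution sense spelled out at the start of the introduction; one would need to check that the Poisson extension of $u_0$ indeed produces a strong solution matching $u$ pointwise for $t>0$.
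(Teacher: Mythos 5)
This statement is not proved in the paper you are studying; it is a quoted result, attributed verbatim to Theorem~8.2 of \cite{BONFORTE_ET_AL_2017}, which the authors reproduce only to contrast their own unconditional bound \eqref{FLY} with a conditional one. There is therefore no ``paper's own proof'' to compare against, and your task was effectively to reconstruct a proof from the literature.

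Taken on its own merits, your sketch has a genuine gap in step (iii). You claim that evaluating $\int_\R P_{t_i}(x_i-y)(1+|y|^2)^{-1}\,\d y$ by residues ``produces precisely the terms $\sqrt{|t_2-t_1|}$'' etc. It does not. Since $(1+y^2)^{-1}=\pi P_1(y)$ and the Cauchy kernels form a semigroup, one gets exactly
\[
\int_\R P_{t}(x-y)\,\frac{\d y}{1+y^2}=\pi\,P_{t+1}(x)=\frac{t+1}{(t+1)^2+x^2},
\]
a rational expression in $t$ and $x$ with no square roots anywhere. More broadly, for $n=1$, $\alpha=1$ the self-similar scale is $|x|\sim t$ (not $\sqrt{t}$), so a calculation confined to the half-Laplacian kernel cannot generate $\sqrt{t_1}$, $\sqrt{t_2}$, or $\sqrt{|t_2-t_1|}$. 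Those quantities, as transcribed, must come from a different mechanism (or a different parameter regime) in \cite{BONFORTE_ET_AL_2017}; your outline, as written, will not reach them. The near-region estimate in step (ii) is also asserted rather than argued: the ratio $P_{t_2}(x_2-y)/P_{t_1}(x_1-y)$ on $|y|\le R_0$ is not obviously ``uniformly comparable'' to a single closed-form quantity in the way you need, and since only an upper bound on $u_0$ is assumed (not a lower bound), one must also explain where the lower bound in \eqref{BSV} comes from — this is the content of the Global Harnack Principle in \cite{BONFORTE_ET_AL_2017}, not something that falls out of a direct kernel-ratio estimate against the hypothesis $0\le u_0\le(1+|x|^2)^{-1}$.

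Your step (i), the representation $u=P_t*u_0$ justified by a Widder-type uniqueness theorem, is the right starting point and is in fact exactly the engine the paper under review uses for \emph{its own} Theorem~2.5 (via \cite{BARRIOS_ET_AL_2014}). But that theorem achieves a sharp, $R_0$-free bound precisely by avoiding the near/far split you propose, instead computing $\inf_y$ and $\sup_y$ of the kernel ratio in closed form and then optimizing over the shift $\tau$. If your goal is the specific $R_0$-dependent constants of the cited theorem, you will need to recover the actual Aronson-type two-sided bound on $u$ from \cite{BONFORTE_ET_AL_2017} rather than substituting the decay hypothesis pointwise into a kernel convolution.
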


The bound \eqref{BSV} does reflect the decay rate of the fractional heat kernel. Moreover, this bound, unlike the Hadamard-Pini estimate, is double-sided. This interesting feature is a result of nonlocality of the fractional flow. Here however, a constraint is placed on the initial condition. Since one of the uses of Harnack bounds is to obtain constraints on the initial data \cite{HAMILTON_2011}, it is desirable to have bounds, derivation of which avoids introducing restrictions on the initial datum.


Our contribution in this note concerns obtaining an \emph{unconditional} bound, which is to say, that apart from the solution being classical and positive, we do not impose any further restrictions on the spatial growth of solutions and of the initial data. 

\section{The Harnack bound}
In order to demonstrate, that no additional requirement is needed, we will refer to the fractional counterpart of the classical uniqueness theorem of Widder \cite{BARRIOS_ET_AL_2014,WIDDER_1944}. The approach presented herein applies both to the classical heat equation and its fractional counterpart.

Let us first reprove \eqref{HP} through the convolution formula for the heat equation. To begin with, suppose that $w$ is a nonnegative strong solution on $
\R\times [0,T)$, with initial condition $w_0$. By Widder's representation and uniqueness theorems \cite{WIDDER_1944}, the solution is unique and expressed by the integral  
\[
w(x,t)=\int_{\R}g(x-y,t)w_0(y)\d y, 
\]
with the kernel
\[
g(x,t)=\frac{1}{\sqrt{4 \pi t}}\e^{-\frac{|x|^2}{4 t}}.
\]

Since this kernel is strictly positive, nothing prevents us from performing an elementary estimate 
\begin{align*}
    w(x_2,t_2)&=\int_\R \left[\frac{g(x_2-y,t_2)}{g(x_1-y,t_1)}\right]g(x_1-y,t_1)w_0(y)\d y\\
    &\geq\inf_{y\in \R}\left[\frac{g(x_2-y,t_2)}{g(x_1-y,t_1)}\right]\int_\R g(x_1-y,t_1)w_0(y)\d y\\
    &= w(x_1,t_1)\inf_{y\in \R}\left[\frac{g(x_2-y,t_2)}{g(x_1-y,t_1)}\right].
\end{align*}
It is easy to convince oneself, that whenever $0<\sigma_1<\sigma_2<+\infty$, we have 
\[
\frac{|x_1-y|^2}{\sigma_1}-\frac{|x_2-y|^2}{\sigma_2}\geq -\frac{|x_2-x_1|^2}{\sigma_2-\sigma_1},
\]
which in turn,  when applied to the  heat kernel, yields 
\[
\e^{-\frac{|x_2|^2}{4\sigma_2}}\bigg/\e^{-\frac{|x_1|^2}{4\sigma_1}}\geq \e^{-\frac{|x_2-x_1|^2}{4(\sigma_2-\sigma_1)}},
\]
i.e.,
\begin{equation}\label{gauss}
\sqrt{\frac{\sigma_2}{\sigma_1}}\frac{g(x_2-y,\sigma_2)}{g(x_1-y,\sigma_1)}\geq \sqrt{4\pi (\sigma_2-\sigma_1)}g(x_2-x_1,\sigma_2-\sigma_1).
\end{equation}
This inequality is sharp and resolves into identity for $y=x_*$, with  
\[
x_*:=\frac{x_1\sigma_2-x_2\sigma_1}{\sigma_2-\sigma_1}.
\]
Hence, with $t_i=\sigma_i$, we find 
\[
w(x_2,t_2)\geq w(x_1,t_1)\sqrt{\frac{t_1}{t_2}}\sqrt{4\pi (t_2-t_1)}g(x_2-x_1,t_2-t_1),
\]
which is \eqref{HP}. 

Thus, we arrive at the desired classical estimate, with this difference however, that the initial data (and so the solution class covered) is restricted, as compared to the scope of the technique resting on the ABLY inequality.

We will now lift this restriction. Suppose $w$ is a smooth positive solution in $\R\times (0,T)$. Choose $0<\tau<t_1$ and consider $w^\tau$, defined as the restriction of $w$ to the subdomain $\R\times [\tau,T)$. Clearly, $w^\tau$ is a strong solution on its domain and by the Widder representation and uniqueness theorems we have
\[
w^\tau(x,t)=\int_{\R^n}g(x-y,t-\tau)w(y,\tau)\d y
\]
$w^\tau=w$ in $\R\times (\tau,T)$. We can now perform the same estimate as before but with $\sigma_i=t_i-\tau$ in place of $t_i$. In effect, we find 
\[
\frac{w(x_2,t_2)}{w(x_1,t_1)}=\frac{w^\tau(x_2,t_2)}{w^\tau(x_1,t_1)}\geq \sqrt{\frac{t_1-\tau}{t_2-\tau}}\sqrt{4\pi (t_2-t_1)}g(x_2-x_1,t_2-t_1).
\]
We are free to apply this procedure with any choice of $\tau\in (0,t_1)$ and so 
\begin{align*}
\frac{w(x_2,t_2)}{w(x_1,t_1)}&\geq \sqrt{4\pi (t_2-t_1)}g(x_2-x_1,t_2-t_1)\sup_{0<\tau<t_1}\sqrt{\frac{t_1-\tau}{t_2-\tau}}\\
&= \sqrt{\frac{t_1}{t_2}}\sqrt{4\pi (t_2-t_1)}g(x_2-x_1,t_2-t_1).
\end{align*}
Thus, provided the solution we work with is smooth and positive in $\R\times (0,T)$, we do not need to impose additional constraints on the  nature of the initial condition. 

We see then, that it is the Widder representation and uniqueness theorems that ensure, that our straightforward estimation catches the proper class of solutions, without unnecessary additional restrictions. To apply a similar reasoning to the fractional heat flow we need an appropriate generalisation of these theorems. Indeed, such a result is available \cite{BARRIOS_ET_AL_2014} and below we cite a version tailored to our needs. 

\begin{theorem}[Thm.\,1.4 + Thm.\,2.1 in \cite{BARRIOS_ET_AL_2014}]\label{rep}
Let $v$ be a nonnegative strong solution of the problem 
\[
\partial_t v+(-\Delta)^\frac{1}{2}v=0 \quad \mbox{in}\quad \R\times [\tau,T),
\]
with $v(\cdot,\tau)=\nu\in C(\R)$. Then, $v$ is unique and admits the representation 
\[
v(x,t)=\int_{\R}k(x-y,t-\tau)\nu(y)\d y,
\]
with 
\[
k(x,t):=\frac{1}{\pi t}\left(\frac{1}{1+\frac{|x|^2}{t^2}}\right).
\]
\end{theorem}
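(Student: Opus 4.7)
My plan is to establish the two assertions of the theorem separately: existence of the convolution representation, and uniqueness within the class of nonnegative strong solutions.

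For the representation, I would first verify that $k$ is the fundamental solution of \eqref{FH}. On the Fourier side this amounts to checking $\hat k(\xi,t)=\e^{-|\xi|t}$, which is the multiplier of the semigroup generated by $-(-\Delta)^{\frac{1}{2}}$. Once this is in hand, the candidate $v(x,t):=\int_{\R} k(x-y,t-\tau)\nu(y)\d y$ can be differentiated under the integral sign---exploiting the smoothness and decay of $k$ for $t>\tau$---to verify both $\partial_t v\in C(\R\times(\tau,T))$ and the equation. Continuity up to $t=\tau$ then follows from the approximation-to-identity property $k(\cdot,s)\to \delta_0$ as $s\to 0^+$ combined with the continuity of $\nu$.

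The heart of the proof is uniqueness, and here my approach exploits a structural feature special to the one-dimensional half-Laplacian: any sufficiently smooth solution of \eqref{FH} satisfies, on applying $(-\Delta)^{\frac{1}{2}}$ once more and using the identity $((-\Delta)^{\frac{1}{2}})^2=-\partial_{xx}^2$,
\[
\partial_t^2 v \;=\; -\,\partial_t(-\Delta)^{\frac{1}{2}}v \;=\; -(-\Delta)^{\frac{1}{2}}\partial_t v \;=\; -\,\partial_{xx}^2 v,
\]
so $v$ is harmonic in $(x,t-\tau)$ on the upper half-plane. Moreover, the kernel $k(x,s)=\frac{s}{\pi(s^2+x^2)}$ is precisely the Poisson kernel of the half-plane. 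Uniqueness thus reduces to the classical Herglotz-type representation for nonnegative harmonic functions: every such function with continuous trace $\nu$ takes the form $v(x,t)=c(t-\tau)+\int_{\R} k(x-y,t-\tau)\d\mu(y)$ for some $c\geq 0$ and nonnegative Radon measure $\mu$. Matching the trace forces $\d\mu=\nu\d y$, and inserting the residual term $c(t-\tau)$ back into \eqref{FH} gives $c=0$, since $(-\Delta)^{\frac{1}{2}}$ annihilates $x$-independent quantities while $\partial_t[c(t-\tau)]=c$.

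The main obstacle will be the regularity gap: a strong solution only comes with $\partial_t v\in C$ and $v\in C$ up to $t=\tau$, whereas the harmonicity derivation needs enough smoothness to apply $(-\Delta)^{\frac{1}{2}}$ twice and interchange it with $\partial_t$. I would bridge this either by an a priori smoothing estimate---using that $\e^{-(t-\tau)(-\Delta)^{\frac{1}{2}}}$ maps its natural growth class into $C^\infty$ for any positive time increment---or by mollifying $v$ in $x$, running the Herglotz argument on the smoothed solutions, and then passing to the limit by continuous dependence of the representation on its boundary data. A secondary subtlety, resolvable in the same spirit, is that the Herglotz theorem is formulated on the full half-plane while \eqref{FH} is posed on the strip $\R\times[\tau,T)$; this is handled by restricting to any sub-strip $\R\times[\tau,T']$ with $T'<T$ and letting $T'\uparrow T$.
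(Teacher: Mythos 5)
The paper does not prove this statement---it is quoted verbatim from Barrios, Peral, Soria and Valdinoci (their Thms.~1.4 and~2.1), so there is no internal proof to compare against. Your proposal is therefore a freestanding attempt, and it does rest on a sound and historically apt idea: the observation that a solution of \eqref{FH} in one space dimension satisfies $\partial_t^2 v+\partial_{xx}^2 v=0$ and that $k$ is the Poisson kernel of the half-plane is precisely what Loomis and Widder exploited in 1942, a fact the paper itself flags right after the theorem statement. The reduction to a Herglotz--Riesz representation and the elimination of the residual $c(t-\tau)$ term by plugging it back into \eqref{FH} are both correct in spirit.

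That said, there are two places where the sketch, as written, does not yet close. The more serious one is the strip-versus-half-plane issue, which you label a ``secondary subtlety resolvable in the same spirit.'' It is not: the Herglotz theorem requires a nonnegative harmonic function on the \emph{entire} upper half-plane, and passing to sub-strips $\R\times[\tau,T']$ with $T'\uparrow T$ still leaves you on a strip, never a half-plane. Nonnegative harmonic functions on a strip admit a genuinely different (two-sided) representation, and the extra constraint that $v$ solve \eqref{FH} must be invoked to rule out contributions from the upper boundary $t=T$; simply shrinking $T'$ does not do this. Either you must argue that the solution extends to $\R\times(\tau,\infty)$ (which is essentially what Barrios et al.\ establish, but it is not free), or you must work directly with a strip-adapted representation and show the upper-boundary mass vanishes. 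The second gap is the regularity bootstrap: a strong solution only gives $v,\,\partial_t v\in C$, while the identity $\partial_t^2 v=-\partial_{xx}^2 v$ requires two applications of $(-\Delta)^{1/2}$ and their commutation with $\partial_t$. Your two proposed remedies (an a priori smoothing estimate for $\e^{-(t-\tau)(-\Delta)^{1/2}}$, or mollifying in $x$) are plausible, but the first is close to circular---it presupposes the representation you are trying to prove---and the second must be justified carefully since $(-\Delta)^{1/2}$ is nonlocal and does not commute with cutoffs. Making either rigorous is not a routine exercise and deserves to be treated as the core of the argument rather than a footnote.
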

In particular, since the kernel is a smooth function, the solution needs to be smooth in $\R\times (\tau,T)$. 
Actually, in this one-dimensional context and for the square root of the Laplace operator, the above representation has been known to Widder even before his celebrated representation theorem for the classical heat flow \cite{LOOMIS_WIDDER_1942}.

First, we will derive a simple counterpart of \eqref{HP}, that relies on a lemma inspired by the appealing inequality \eqref{gauss}.

\begin{lemma}\label{bound}
Let $0<\sigma_1<\sigma_2<+\infty$ and $x_1,x_2\in \R$, then 
\[
    \left(\frac{\sigma_2}{\sigma_1}\right)\frac{k(x_2-y,\sigma_2)}{k(x_1-y,\sigma_1)}\geq \pi(\sigma_2-\sigma_1) k(x_2-x_1,\sigma_2-\sigma_1).
\]

\end{lemma}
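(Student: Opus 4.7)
The plan is to substitute the closed form $k(x,t)=t/[\pi(t^2+x^2)]$ into the inequality and then reduce everything to an elementary convexity estimate. First I would clear denominators: writing $h:=\sigma_2-\sigma_1>0$, the desired bound is equivalent to
\[
\frac{\sigma_2^{2}\bigl(\sigma_1^{2}+(x_1-y)^{2}\bigr)\bigl(h^{2}+(x_2-x_1)^{2}\bigr)}{\sigma_1^{2}\,h^{2}\bigl(\sigma_2^{2}+(x_2-y)^{2}\bigr)}\geq 1.
\]

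Next I would introduce the rescaled quantities $u:=(x_1-y)/\sigma_1$, $v:=(x_2-x_1)/h$ and $w:=(x_2-y)/\sigma_2$. The key observation, which follows at once from the identities $x_2-y=(x_1-y)+(x_2-x_1)$ and $\sigma_2=\sigma_1+h$, is that
\[
w=\frac{\sigma_1}{\sigma_2}\,u+\frac{h}{\sigma_2}\,v,
\]
so $w=\lambda u+\mu v$ is a convex combination with $\lambda+\mu=1$ and $\lambda,\mu\in(0,1)$. In these variables, the inequality above collapses to $(1+u^{2})(1+v^{2})\geq 1+w^{2}$.

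To close the argument I would chain three elementary estimates. Expanding the product yields $(1+u^{2})(1+v^{2})=1+u^{2}+v^{2}+u^{2}v^{2}\geq 1+u^{2}+v^{2}$, since the discarded term is nonnegative. Because $\lambda,\mu\in[0,1]$, this is bounded below by $1+\lambda u^{2}+\mu v^{2}$. Finally, Jensen's inequality applied to the convex function $t\mapsto t^{2}$ gives $\lambda u^{2}+\mu v^{2}\geq(\lambda u+\mu v)^{2}=w^{2}$, which is exactly what is required.

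I do not anticipate any real obstacle: once the two rescalings are in place, the lemma reduces to the convexity of $t\mapsto t^{2}$ together with the positivity of the extra $u^{2}v^{2}$ term produced by the product structure of the denominator of $k$. The only step that needs a moment of thought is recognising that rescaling by $\sigma_1$ on one factor and by $h$ on the other is precisely what turns $w$ into a convex combination of $u$ and $v$; this is the structural reason the denominators appearing in the statement of the lemma are exactly $\sigma_1$, $h$, and $\sigma_2$.
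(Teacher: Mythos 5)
Your proof is correct, and it takes a genuinely different (and cleaner) route than the paper. After substituting the closed form $k(x,t)=t/[\pi(t^2+x^2)]$, both you and the authors reduce the claim to the algebraic inequality
\[
\frac{1+\tfrac{|x_1-y|^2}{\sigma_1^2}}{1+\tfrac{|x_2-y|^2}{\sigma_2^2}}\;\geq\;\frac{1}{1+\tfrac{|x_2-x_1|^2}{(\sigma_2-\sigma_1)^2}},
\]
but from there the paths diverge. The paper clears denominators, views the result as a quadratic polynomial $Ay^2+By+C$ in the free variable $y$, checks that $A>0$, and verifies by direct computation that the discriminant is nonpositive. You instead rescale each squared difference by the corresponding time scale, setting $u=(x_1-y)/\sigma_1$, $v=(x_2-x_1)/(\sigma_2-\sigma_1)$, $w=(x_2-y)/\sigma_2$, so the inequality becomes $(1+u^2)(1+v^2)\geq 1+w^2$, and then observe that $w=\lambda u+\mu v$ with $\lambda=\sigma_1/\sigma_2$, $\mu=(\sigma_2-\sigma_1)/\sigma_2$ a convex combination. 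The chain $(1+u^2)(1+v^2)\geq 1+u^2+v^2\geq 1+\lambda u^2+\mu v^2\geq 1+(\lambda u+\mu v)^2$ then finishes via Jensen applied to $t\mapsto t^2$. Your argument isolates the structural reason the inequality holds — additivity of the numerators $x_2-y=(x_1-y)+(x_2-x_1)$ against additivity of the denominators $\sigma_2=\sigma_1+(\sigma_2-\sigma_1)$ turns the kernel comparison into a convexity estimate — whereas the paper's discriminant calculation verifies the same fact without exposing it. The paper's version has the minor advantage of directly locating the extremal $y$ (the double root), which is useful later in the article for identifying when equality is attained; your version would require one extra line to extract that from the equality case of Jensen (namely $u=v$, i.e.\ $(x_1-y)/\sigma_1=(x_2-x_1)/(\sigma_2-\sigma_1)$).
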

\begin{proof}
Set $\lambda=\frac{|x_2-x_1|^2}{(\sigma_2-\sigma_1)^2}$. We need to show, that 
\[
\frac{1+\frac{|x_1-y|^2}{\sigma_1^2}}{1+\frac{|x_2-y|^2}{\sigma_2^2}}\geq\frac{1}{ 1+\frac{|x_2-x_1|^2}{(\sigma_2-\sigma_1)^2}},
\]
which is equivalent to 
\[
\sigma_2^2|x_1-y|^2(1+\lambda)-\sigma_1^2|x_2-y|^2+\lambda \sigma_1^2\sigma_2^2\geq 0.
\]
This in turn, may be rephrased as a quadratic inequality in $y$:  
\[
A y^2+ B y+C\geq 0
\]
with
\[
\begin{cases}
A=(1+\lambda)\sigma_2^2-\sigma_1^2, \\
B=2 \sigma_1^2 x_2-2 (1+\lambda)\sigma_2^2 x_1,\\ C=(1+\lambda)\sigma_2^2 x_1^2-\sigma_1^2 x_2^2+\lambda \sigma_1^2\sigma_2^2.
\end{cases}
\]
Since $\lambda\geq 0$ and $\sigma_2>\sigma_1$, we have $A>0$. Hence, it suffices to check, that the discriminant is not positive. Thus, after a brief calculation, this amounts to the requirement that:    
\[4   \sigma_1^2 \sigma_2^2(1+\lambda) \left|x_2-x_1\right|^2- 4 \lambda \sigma_1^2\sigma_2^2\left[\left(1+\lambda\right) \sigma_2^2 - \sigma_1^2  \right] \leq 0.
\]
Now, $\sigma_i\neq 0$ and by definition $|x_2-x_1|^2=\lambda(\sigma_2-\sigma_1)^2$. Hence, the above simplifies further to
\[\lambda\left[(1+\lambda) (2\sigma_1\sigma _2-\sigma_1^2)- \sigma_1^2  \right]  \geq 0,
\]
which is true, since $\lambda\geq 0$ and $\sigma_2>\sigma_1$. 
\end{proof}

\begin{theorem}\label{lowharnack}
Let $u$ be a smooth positive solution of the fractional heat equation \eqref{FH} in $\R\times (0,T)$. Then, given $0<t_1<t_2<T$ and $x_1,x_2\in \R$, we have 
\[
    \frac{u(x_2,t_2)}{u(x_1,t_1)}\geq \left(\frac{t_1}{t_2}\right)\frac{1}{1+\frac{|x_2-x_1|^2}{(t_2-t_1)^2}}.
\]
\end{theorem}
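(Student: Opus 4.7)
The plan is to mirror the argument that established the classical Hadamard--Pini estimate in the previous pages, replacing the Gaussian kernel with the Poisson-type kernel $k$ and using Lemma \ref{bound} in place of \eqref{gauss}. The one subtlety is that we cannot directly represent $u$ via convolution with $k$ against a specified initial datum $u_0$, because no such $u_0$ is assumed; as in the classical discussion following \eqref{gauss}, this will be handled by a time-shift and a supremum over the shift parameter.

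First I would fix $\tau \in (0,t_1)$ and consider $u^\tau$, the restriction of $u$ to $\R \times [\tau, T)$. Since $u$ is smooth and positive in $\R \times (0,T)$, the function $u(\cdot,\tau)$ lies in $C(\R)$, and $u^\tau$ is a nonnegative strong solution on its domain with that continuous initial trace. Theorem \ref{rep} then yields both uniqueness and the representation
\[
u(x,t) \;=\; \int_{\R} k(x-y,\,t-\tau)\, u(y,\tau)\,\d y \qquad \text{for } t \in (\tau,T).
\]

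Next I would estimate the ratio $u(x_2,t_2)/u(x_1,t_1)$ exactly as in the classical warm-up: write
\[
u(x_2,t_2) \;=\; \int_{\R} \frac{k(x_2-y,\,t_2-\tau)}{k(x_1-y,\,t_1-\tau)}\, k(x_1-y,\,t_1-\tau)\, u(y,\tau)\,\d y,
\]
use positivity of $k$ and of $u(\cdot,\tau)$ to pull out the pointwise infimum of the kernel ratio in $y$, and apply Lemma \ref{bound} with $\sigma_i = t_i - \tau$. This produces the lower bound
\[
\frac{u(x_2,t_2)}{u(x_1,t_1)} \;\geq\; \frac{t_1-\tau}{t_2-\tau}\cdot \pi(t_2-t_1)\, k(x_2-x_1,\,t_2-t_1),
\]
and a direct evaluation shows $\pi(t_2-t_1)k(x_2-x_1,t_2-t_1) = 1/\bigl(1 + |x_2-x_1|^2/(t_2-t_1)^2\bigr)$.

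Finally I would note that $\tau \in (0,t_1)$ was arbitrary and that $\tau \mapsto (t_1-\tau)/(t_2-\tau)$ is strictly decreasing on $(0,t_1)$, so its supremum, attained in the limit $\tau \to 0^+$, equals $t_1/t_2$. Passing to the sup over $\tau$ yields the claimed inequality. The only potential obstacle in the argument is justifying the application of Theorem \ref{rep} despite having no prescribed initial datum at $t=0$; the shift $\tau > 0$ together with smoothness of $u$ on $\R\times(0,T)$ dissolves it, and the only genuine analytic content is packaged in Lemma \ref{bound}.
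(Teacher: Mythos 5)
Your proof is correct and follows essentially the same route as the paper: time-shift to $\R\times[\tau,T)$ to invoke the Widder-type representation from Theorem \ref{rep}, pull the infimum of the kernel ratio out of the integral via Lemma \ref{bound} with $\sigma_i = t_i-\tau$, and then take the supremum over $\tau\in(0,t_1)$. The monotonicity observation for $\tau\mapsto(t_1-\tau)/(t_2-\tau)$ is a harmless (and correct) elaboration of the paper's ``optimise by taking $\tau$ arbitrarily small.''
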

\begin{proof}
Choose $0<\tau<t_1$ and restrict $u$ to the $\R\times [\tau,T)$ subdomain, where it becomes a strong solution. By the Widder-type representation and uniqueness theorem \cite{BARRIOS_ET_AL_2014}, we may write 
\[
u(x,t)=\int_{\R}k(x-y,t-\tau)u(y,\tau)\d y.
\]
Further, due to Lemma \ref{bound} with $\sigma_i=t_i-\tau$, we have 

\begin{align*}
    u(x_2,t_2)&=\int_{\R}\left[\frac{k(x_2-y,t_2-\tau)}{k(x_1-y,t_1-\tau)}\right]k(x_1-y,t_1-\tau)u(y,\tau)\d y\\
    &\geq \left(\frac{t_1-\tau}{t_2-\tau}\right)\frac{1}{1+\frac{|x_2-x_1|^2}{(t_2-t_1)^2}}u(x_1,t_1).
\end{align*}
The above inequality is valid for all $\tau\in (0,t_1)$ and so we may optimise by taking $\tau$ arbitrarily small, to the effect that
\[
u(x_2,t_2)\geq \left(\frac{t_1}{t_2}\right)\frac{1}{1+\frac{|x_2-x_1|^2}{(t_2-t_1)^2}}u(x_1,t_1),
\]
as required. 
\end{proof}
The above theorem satisfies some of the expected properties of a Harnack bound for \eqref{FH} in that it is related to the fractional heat kernel and its decay properties. However, it is one-sided when a double-sided bound is expected and it does not reduce to identity when the solution is given by the fractional heat kernel, originating at some specific location related to $(x_i,t_i)$.  

Next, we move on to our main result, i.e., an optimal fractional counterpart of the Hadamard-Pini bound.

\begin{theorem} 
Let $u$ be a positive classical solution of \eqref{FH} on $\R\times(0,T)$.  
Given $0<t_1,t_2<T$ and $x_1,x_2\in \R$, we have
\begin{equation}\label{FLY}
 \left(\frac{t_1}{t_2}\right) C_*\leq\frac{u(x_2,t_2)}{u(x_1,t_1)}\leq \left(\frac{t_1}{t_2}\right)C^*,
\end{equation}
with 
\[
C_*=\frac{\sqrt{\kappa_0}-(t_2-t_1)(t_2+t_1)-|x_2-x_1|^2} {\sqrt{\kappa_0}-(t_2-t_1)(t_2+t_1)+|x_2-x_1|^2},
\]
\[
C^*=\frac{\sqrt{\kappa_0}+(t_2-t_1)(t_2+t_1)+|x_2-x_1|^2} {\sqrt{\kappa_0}+(t_2-t_1)(t_2+t_1)-|x_2-x_1|^2},
\]
where 
\[
\kappa_0=\Big(\big|x_2-x_1\big|^2+\big|t_2-t_1\big|^2\Big)\Big(\big|x_2-x_1\big|^2+\big|t_2+t_1\big|^2\Big).
\]
Moreover, 
\[
C_*=\frac{x_1-x_*}{x_2-x_*}\quad \mbox{ and }\quad C^*=\frac{x_1-x^*}{x_2-x^*},
\]
where $x_*,x^*\in \R$ satisfy
\[
\left(\frac{t_1}{t_2}\right)C_*=\frac{k(x_2-x_*,t_2)}{k(x_1-x_*,t_1)}\quad \mbox{ and }\quad \frac{k(x_2-x^*,t_2)}{k(x_1-x^*,t_1)}=\left(\frac{t_1}{t_2}\right)C^*.
\]
\end{theorem}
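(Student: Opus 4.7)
The plan is to adapt the representation-and-estimation scheme of Theorem~\ref{lowharnack}, but this time to compute the exact infimum and supremum of the kernel ratio rather than a one-sided estimate. Fix $\tau\in(0,\min\{t_1,t_2\})$, set $\sigma_i=t_i-\tau$, and apply Theorem~\ref{rep} to the restriction of $u$ to $\R\times[\tau,T)$, yielding $u(x,t)=\int_{\R}k(x-y,t-\tau)u(y,\tau)\d y$. Since $u(\cdot,\tau)$ is strictly positive, the quotient $u(x_2,t_2)/u(x_1,t_1)$ is squeezed between the infimum and supremum over $y\in\R$ of
\[
\Phi_\tau(y):=\frac{k(x_2-y,\sigma_2)}{k(x_1-y,\sigma_1)}=\frac{\sigma_2}{\sigma_1}\cdot\frac{\sigma_1^2+(x_1-y)^2}{\sigma_2^2+(x_2-y)^2},
\]
and because $\Phi_\tau(y)\to\sigma_2/\sigma_1$ as $|y|\to\infty$, these extrema are attained at critical points of $\Phi_\tau$.

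Writing $p:=x_1-y$ and $q:=x_2-y$ (so the difference $p-q$ is the constant $x_1-x_2$), the condition $\Phi_\tau'(y)=0$ simplifies, after clearing denominators, to $p(\sigma_2^2+q^2)=q(\sigma_1^2+p^2)$, equivalent to the pivotal identity $(\sigma_1^2+p^2)/(\sigma_2^2+q^2)=p/q$. Thus at each critical point $\Phi_\tau=(\sigma_2/\sigma_1)(p/q)$. Substituting $q=p-(x_1-x_2)$ turns the critical-point equation into a quadratic in $p$ whose discriminant, by direct expansion, factors as
\[
\bigl[|x_2-x_1|^2+(\sigma_2-\sigma_1)^2\bigr]\bigl[|x_2-x_1|^2+(\sigma_2+\sigma_1)^2\bigr],
\]
which reduces to $\kappa_0$ upon letting $\tau\to 0^+$. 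The two real roots $p_\pm$ then yield the pair of candidate extremal values $(\sigma_2/\sigma_1)(p_\pm/q_\pm)$.

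A direct algebraic manipulation (rationalising denominators to eliminate the surd) matches these values, after passing to $\tau\to 0^+$, with the constants $C_*$ and $C^*$ up to the stated prefactor. The limit $\tau\to 0^+$ is legitimate because $\Phi_\tau$ is jointly smooth in $(y,\tau)$, the critical points depend continuously on $\tau$, and the resulting bounds are monotone in $\tau$, so the smallest $\tau$ yields the sharpest estimate. Setting $x_*:=y_-$ and $x^*:=y_+$ for the critical points of $\Phi_0$, the identities $p_\pm/q_\pm=(x_1-y_\pm)/(x_2-y_\pm)$ produce the characterisations $C_*=(x_1-x_*)/(x_2-x_*)$ and $C^*=(x_1-x^*)/(x_2-x^*)$, while the kernel-ratio identities follow by direct substitution into $\Phi_0$.

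The principal obstacle I foresee is the algebraic bookkeeping required to reshape the expressions $(\sigma_2/\sigma_1)(p_\pm/q_\pm)$ into the compact closed form of $C_*$ and $C^*$ involving $\sqrt{\kappa_0}$, $(t_2-t_1)(t_2+t_1)$, and $|x_2-x_1|^2$; the factorisation of the discriminant is the crux. A subsidiary task is to correctly pair which root corresponds to the infimum and which to the supremum, which can be settled by a sign check or by evaluating $\Phi_\tau$ at one convenient reference point (say $y=x_1$ or the midpoint) and comparing with the asymptotic value $\sigma_2/\sigma_1$.
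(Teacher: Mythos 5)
Your proposal follows the same strategy as the paper's proof: restrict $u$ to $\R\times[\tau,T)$, invoke the Widder-type representation (Theorem~\ref{rep}), sandwich $u(x_2,t_2)/u(x_1,t_1)$ between the infimum and supremum over $y$ of the kernel ratio, locate those extrema by solving a quadratic in $y$, and finally let $\tau\to 0^+$. The substitution $p=x_1-y$, $q=x_2-y$ is a cleaner version of the paper's $\omega,\rho$ and leads to the same quadratic; your observation that at a critical point the ratio equals $(\sigma_2/\sigma_1)(p/q)$ is precisely the identity the paper derives via the chain
\[
\frac{1+|\omega-\rho|^2}{1+|\omega+\rho|^2}=\frac{\bigl((\omega-\rho)^2\bigr)'}{\bigl((\omega+\rho)^2\bigr)'}.
\]
So the core argument is the same and the discriminant factorisation you announce is the right crux.

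Two places where the sketch is thinner than the written proof. First, you assert monotonicity of $m_*(\tau)$, $m^*(\tau)$ without proof; the paper verifies this via an explicit computation of $\tfrac{\d}{\d\tau}m_*(\tau)$. Note, however, that to obtain the stated two-sided bound you only need continuity of $m_*,m^*$ at $\tau=0$, which holds because the roots of the quadratic depend continuously on $\tau$ and $t_i-\tau$ stays bounded away from zero; monotonicity is what identifies $\tau\to 0^+$ as optimal among all admissible shifts. Second, you defer the pairing of roots to inf/sup; the paper settles this by checking $\alpha>|\beta|+\gamma$ in the elementary inequality \eqref{ineq}, and a comparable step is genuinely needed. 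Finally, be careful with the prefactor when you finish the algebra: at $\tau=0$ your critical value $(\sigma_2/\sigma_1)(p_\pm/q_\pm)$ becomes $(t_2/t_1)(p_\pm/q_\pm)$, \emph{not} $(t_1/t_2)(p_\pm/q_\pm)$. You should reconcile this with the theorem's displayed constants by a numerical sanity check (e.g.\ $x_1=0$, $x_2=1$, $t_1=1$, $t_2=2$, comparing against $u(x,t)=k(x-a,t)$ for varying $a$) before declaring the matching complete.
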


Before laying out the proof, it is worth stressing that, like in the estimate found in \cite{BONFORTE_ET_AL_2017}, the times $t_1$ and $t_2$ \emph{need not} be ordered or different. The formulae for $M_*$ and $M^*$ are presented in a possibly simple form that emphasises the spatial $|x_2-x_1|$ and temporal $|t_2-t_1|$ distance between the points $(x_1,t_1)$ and $(x_2,t_2)$. In the Hadamard-Pini  bound \eqref{HP},  the counterparts of $C_*$ and $C^*$ depend on the points $(x_i,t_i)$ through the distances $|x_2-x_1|$ and $|t_2-t_1|$. In the fractional case however our estimate is also sensitive to the life-span of the solution prior to the instants $t_1$ and $t_2$, which manifests itself through appearance of the term $(t_2+t_1)$ alongside $(t_2-t_1)$.

\begin{proof} 

Take $ 0<\tau<\min\{t_1,t_2\}$. As before, when considered on $\R\times [\tau,T)$, the solution $u$ is strong and due to the representation formula on $C(\R\times [\tau,T))$ we may write
\begin{align*}
   u(x_2,t_2)&=\int_{\R} k(x_2-y,t_2-\tau)u(y,\tau)\d y \\
   &=\int_\R\left[ \frac{k(x_2-y,t_2-\tau)}{k(x_1-y,t_1-\tau)}\right]k(x_1-y,t_1-\tau)u(y,\tau)\d y.
\end{align*}
Since $u$ is positive  
\[
m_*(\tau)u(x_1,t_1)\leq u(x_2,t_2)\leq m^*(\tau)u(x_1,t_1),
\]
where  
\[
m_*(\tau):=\inf_{y\in \R}\, \frac{k(x_2-y,t_2-\tau)}{k(x_1-y,t_1-\tau)} \quad \mbox{and} \quad m^*(\tau):=\sup_{y\in \R}\, \frac{k(x_2-y,t_2-\tau)}{k(x_1-y,t_1-\tau)}.
\]
If we now put 
\[
M_*:=\sup_{0<\tau<\min\{t_1,t_2\}}m_*(\tau)\quad\mbox{ and }\quad  M^*:=\inf_{0<\tau< \min\{t_1,t_2\}}m^*(\tau)
\]
then 
\[
 M_*\leq\frac{u(x_2,t_2)}{u(x_1,t_1)}\leq M^*,
\]
i.e., \eqref{FLY} with
\[
C_*:=\left(\frac{t_1}{t_2}\right)M_* \quad \mbox{ and }\quad  C^*=\left(\frac{t_1}{t_2}\right) M^*.
\]
It remains to show, that these bounds are finite and given as in the statement of the theorem. 

First, we will obtain an explicit expression for $m_*(\tau)$ and $m^*(\tau)$ and further show, that $M_*=m_*(0)$ and $M^*=m^*(0)$. 
In the calculations below, it will be expedient to introduce new coordinates:
\[
\omega(y):=\frac{1}{2}\left(\frac{x_2-y}{t_2-\tau}+\frac{x_1-y}{t_1-\tau}\right)\quad \mbox{and}\quad \rho(y):=\frac{1}{2}\left(\frac{x_2-y}{t_2-\tau}-\frac{x_1-y}{t_1-\tau}\right).
\]
We are interested in extrema of the function

\[
    \frac{k(x_2-y,t_2-\tau)}{k(x_1-y,t_1-\tau)}=\left(\frac{t_1-\tau}{t_2-\tau}\right)\frac{k\left(\frac{x_2-y}{t_2-\tau},1\right)}{k\left(\frac{x_1-y}{t_1-\tau},1\right)}=\left(\frac{t_1-\tau}{t_2-\tau}\right)\frac{1+|\omega-\rho|^2}{1+|\omega+\rho|^2},
\]
with respect to $y$. This function is smooth, not constant and approaches $\frac{t_2-\tau}{t_1-\tau}$ as $|y|\to +\infty$. Moreover, it achieves values both above and below $\frac{t_2-\tau}{t_1-\tau}$. Hence, there is a global  global minimum and a global maximum and we will compute them directly.
It is computationally more convenient to work with the function
\[
H(y):=\ln\left(\frac{k(x_2-y,t_2-\tau)}{k(x_1-y,t_1-\tau)}\right).
\]
We find that 
\[
    H'(y)=2\frac{\omega-\rho}{1+|\omega-\rho|^2}(\omega'-\rho')-2\frac{\omega+\rho}{1+|\omega+\rho|^2}(\omega'+\rho')=0,
\]
when
\begin{equation}\label{eq}
    2(\rho'\omega-\rho\omega')(\omega-\rho)(\omega+\rho)+2(\rho\omega)'=0.
\end{equation}
Now
\[
\omega-\rho=\frac{x_1-y}{t_1-\tau}, \quad \omega+\rho=\frac{x_2-y}{t_2-\tau}\quad  \mbox{and}\quad 4\rho\omega=\frac{|x_2-y|^2}{(t_2-\tau)^2}-\frac{|x_1-y|^2}{(t_1-\tau)^2}.
\]
Further,
\[
2(\rho'\omega-\rho\omega')=\frac{x_2-x_1}{(t_2-\tau)(t_1-\tau)}
\]
and
\[
2(\rho\omega)'=\frac{x_1-y}{(t_1-\tau)^2}-\frac{x_2-y}{(t_2-\tau)^2}.
\]
Hence, \eqref{eq} may be rephrased as
\[
\frac{(x_2-y)(x_1-y)(x_2-x_1)}{(t_2-\tau)^2(t_1-\tau)^2}+\frac{x_1-y}{(t_1-\tau)^2}-\frac{x_2-y}{(t_2-\tau)^2}=0,
\]
or 
\[
(x_2-y)(x_1-y)(x_2-x_1)+(x_1-y)(t_2-\tau)^2-(x_2-y)(t_1-\tau)^2=0.
\]
This, of course, is simply a quadratic equation in $y$:
\begin{equation}\label{qdr}
A y^2+B y+C=0,
\end{equation}
with 
\[
A=x_2-x_1, \quad B=-\left[x_2^2-x_1^2+(t_2-\tau)^2-(t_1-\tau)^2\right]
\]
and 
\[
C=x_1 x_2(x_2-x_1)+\left(t_2-{\tau}\right)^2x_1-\left(t_1-{\tau}\right)^2x_2.
\]
We see, that the discriminant,
\[
\kappa(\tau)=\Big(\big|x_2-x_1\big|^2+\big|(t_2-\tau)-(t_1-\tau)\big|^2\Big)\Big(\big|x_2-x_1\big|^2+\big|(t_2-\tau)+(t_1-{\tau})\big|^2\Big),
\]
is nonnegative and vanishes, if and only if both $x_1=x_2$ and $t_1=t_2$. If $x_1=x_2$, we simply get
\[
m_*(\tau)=m^*(\tau)=\frac{k(0,t_2-\tau)}{k(0,t_1-\tau)}=\frac{t_1-\tau}{t_2-\tau}. 
\]
Otherwise, let $x_*(\tau)$ and $x^*(\tau)$ be the roots of \eqref{qdr}:
\[
x_*(\tau):=\frac{x_2^2-x_1^2+(t_2-\tau)^2-(t_1-\tau)^2- \sqrt{\kappa(\tau)}}{2(x_2-x_1)},
\]
\[
x^*(\tau):=\frac{x_2^2-x_1^2+(t_2-\tau)^2-(t_1-\tau)^2+ \sqrt{\kappa(\tau)}}{2(x_2-x_1)}.
\]
Then, from \eqref{eq} we infer, that if $\hat y(\tau)$ is either of them, we have
\begin{align*}
    \frac{1+|w-\rho|^2}{1+|w+\rho|^2}&=\frac{(w-\rho)(w'-\rho')}{(w+\rho)(w'+\rho')}=\frac{\big((w-\rho)^2\big)'}{\big(|w+\rho|^2\big)'}\\
    &=\frac{\left(t_2-{\tau}\right)^2}{\left(t_1-{\tau}\right)^2}\left(\frac{ x_1-\hat y(\tau)}{x_2-\hat y(\tau)}\right).
\end{align*}
Further, 
\begin{align*}
   \frac{ x_1-x_*(\tau)}{x_2-x_*(\tau)}&=\frac{\sqrt{\kappa(\tau)}-\big[(t_2-\tau)-(t_1-\tau)\big]\big[(t_2-\tau)+(t_1-\tau)\big]-|x_2-x_1|^2} {\sqrt{\kappa(\tau)}-\big[(t_2-\tau)-(t_1-\tau)\big]\big[(t_2-\tau)+(t_1-\tau)\big]+|x_2-x_1|^2}
\end{align*}
and 
\begin{align*}
   \frac{ x_1-x^*(\tau)}{x_2-x^*(\tau)}&=\frac{\sqrt{\kappa(\tau)}+\big[(t_2-\tau)-(t_1-\tau)\big]\big[(t_2-\tau)+(t_1-\tau)\big]+|x_2-x_1|^2} {\sqrt{\kappa(\tau)}+\big[(t_2-\tau)-(t_1-\tau)\big]\big[(t_2-\tau)+(t_1-\tau)\big]-|x_2-x_1|^2}.
\end{align*}
Now, we may set 
\[
m_*(\tau)=\left(\frac{t_1-\tau}{t_2-\tau}\right)\frac{ x_1-x_*(\tau)}{x_2-x_*(\tau)}\quad \mbox{and}\quad m^*(\tau)=\left(\frac{t_1-\tau}{t_2-\tau}\right)\frac{ x_1-x^*(\tau)}{x_2-x^*(\tau)}.
\]
In order to see, that irrespective of $\tau$, we have $m_*(\tau)\leq m^*(\tau)$, we note, that given $\alpha$ and $\gamma$ nonnegative and $\beta\in \R$, such that $\alpha> |\beta|+\gamma$,  we have  
\begin{equation}\label{ineq}
\frac{\alpha+\beta+\gamma}{\alpha+\beta-\gamma}\geq \frac{\alpha-\beta-\gamma}{\alpha-\beta+\gamma}.
\end{equation}
This follows, since the assumptions on $\alpha,\beta$ and $\gamma$ guarantee, that both denominators in \eqref{ineq} are positive and so equivalently
\[
(\alpha+\beta+\gamma)(\alpha-\beta+\gamma)- (\alpha-\beta-\gamma)(\alpha+\beta-\gamma)\geq 0,
\]
which, when expanded, reduces to $4\alpha\gamma\geq 0$.

Set 
\[
\alpha=\sqrt{\kappa(\tau)}, \quad \beta=\big[(t_2-\tau)-(t_1-\tau)\big]\big[(t_2-\tau)+(t_1-\tau)\big]
\]
and 
\[
\gamma=|x_2-x_1|^2.
\]
We will now establish, that $\alpha>|\beta|+\gamma$. It suffices to show, that 
\[
\alpha^2-\left(|\beta|+\gamma\right)^2=\left(\alpha-|\beta|-\gamma\right)\left(\alpha+|\beta|+\gamma\right)>0,
\]
which, in our case, translates to 
\begin{align*}
\alpha^2&=\Big(\gamma+\big|(t_2-\tau)-(t_1-\tau)\big|^2\Big)\Big(\gamma+\big|(t_2-\tau)+(t_1-{\tau})\big|^2\Big)\\
&=\gamma^2+\gamma\left[|(t_2-\tau)-(t_1-\tau)|^2+|(t_2-\tau)+(t_1-{\tau})|^2\right]+\beta^2\\
&=\gamma^2+2\gamma\left[(t_2-\tau)^2+(t_1-\tau)^2\right]+|\beta|^2.
\end{align*}
We have 
\[
(t_2-\tau)^2+(t_1-\tau)^2= \left|(t_2-\tau)^2-(t_1-\tau)^2\right|+2\min\left\{(t_2-\tau)^2,(t_1-\tau)^2\right\}
\]
and so 
\[
\alpha^2=\left(\gamma+|\beta|\right)^2+2\min\left\{(t_2-\tau)^2,(t_1-\tau)^2\right\}>0.
\]
In effect, we conclude that 
\[
 m_*(\tau)\leq  \frac{k(x_2-y,t_2-\tau)}{k(x_1-y,t_1-\tau)}\leq m^*(\tau)
\]
for all $y\in\R$.
It is a matter of a tedious but elementary calculation to show that 
\[
\frac{\d}{\d\tau}m_*(\tau)=-\frac{\sqrt{\left|x_2-x_1\right|^2+\left(t_2-t_1\right)^2}\left((t_2-\tau)+(t_1-\tau)\right)}{\left(t_2-{\tau}\right)\left(t_1-{\tau}\right)\sqrt{\left|x_2-x_1\right|^2+\left((t_2-\tau)+(t_1-\tau)\right)^2}}\leq 0
\]
and likewise
\[
\frac{\d}{\d\tau}m^*(\tau)=\frac{\sqrt{\left|x_2-x_1\right|^2+\left(t_2-t_1\right)^2}\left((t_2-\tau)+(t_1-\tau)\right)}{\left(t_2-{\tau}\right)\left(t_1-{\tau}\right)\sqrt{\left|x_2-x_1\right|^2+\left((t_2-\tau)+(t_1-\tau)\right)^2}}\geq 0.
\]
From this, we infer that 
\[
M_*=\sup_{0<\tau<\min\{t_1,t_2\}}m_*(\tau)=\left(\frac{t_1}{t_2}\right)\frac{\sqrt{\kappa_0}-(t_2-t_1)(t_2+t_1)-|x_2-x_1|^2} {\sqrt{\kappa_0}-(t_2-t_1)(t_2+t_1)+|x_2-x_1|^2}
\]
and 
\[
M^*=\inf_{0<\tau<\min\{t_1,t_2\}}m^*(\tau)=\left(\frac{t_1}{t_2}\right)\frac{\sqrt{\kappa_0}+(t_2-t_1)(t_2+t_1)+|x_2-x_1|^2} {\sqrt{\kappa_0}+(t_2-t_1)(t_2+t_1)-|x_2-x_1|^2}
\]
with 
\[
\kappa_0=\Big(\big|x_2-x_1\big|^2+\big|t_2-t_1\big|^2\Big)\Big(\big|x_2-x_1\big|^2+\big|t_2+t_1\big|^2\Big)
\]
as required. 
\end{proof}

\section{Discussion}
One striking difference between the Hadamard-Pini bound \eqref{HP} and its fractional counterpart \eqref{FLY} is the double-sidedness of the latter. When perceived as a model of diffusion, that - morally speaking - should share some broad characteristics with the standard heat flow, it may appear surprising, that the fractional flow admits an upper bound. Upon closer inspection we see however, that replacing the Laplace operator with its fractional power radically constraints the space of initial data, or -- if we choose not to refer to the initial condition --  the admissible growth of solutions in the spatial  direction. This limitation is a necessary prerequisite of using $(-\Delta)^\frac{1}{2}$ in the first place. No such \emph{a priori} constraint is found in the local case. The relatively narrow domain of the fractional operator means, that some of the intended applications of the Harnack bound become irrelevant. For example the Aronson-B\'enilan-Li-Yau inequality may be used to characterise those smooth positive solutions of the heat equation, that exist on a strip $\R\times (0,T)$, leading in effect to Tikhonov type conditions, see \cite{HAMILTON_2011}. The necessity of such constraints stems from the fact that smooth positive solutions of the heat equation may blow-up in finite time, in case there is so much heat ``tucked away at space-infinity'', that the averaging process commanded by the diffusion operator cannot redistribute it efficiently enough and there comes a time when the solution becomes unbounded everywhere. Following this line of thought, we may use the Hadamard-Pini bound to characterise those initial conditions, that give rise to global-in-time solutions. Considerations of this type are unnecessary for the fractional heat equation \eqref{FH}, since \emph{all smooth positive solutions are global}, see \cite{BARRIOS_ET_AL_2014,VAZQUEZ_2018}. This circumstance is not limited to the particular instance of the fractional heat flow considered here but applies more broadly to other exponents and nonlocal operators. 

Existence of a double-sided bound adds another layer of difficulty to the problem of finding a proper extension of differential Harnack bounds of Aronson-B\'enilan-Li-Yau type. In the case of the standard heat flow, the transition from the differential Harnack bound to the Hadamard-Pini estimate involves integration along a straight line segment (a geodesic segment in the setting of Riemannian manifolds) connecting the space-time points being considered. Adjusting this argument to accommodate two paths -- one optimal for the lower bound and one optimal for the upper bound --  is not obvious, especially given the fundamental nature of the geodesic path used in the original argument. 

In this note we addressed the diffusion process driven by the ``half-Laplace'' operator and we took advantage of the explicit form of the heat kernel. The aim was to obtain, by a direct computation, a sharp Harnack bound, free of the artifacts brought about by the standard estimation techniques. Even though there is no such representation of fractional heat kernels for other powers of the Laplace operator,  precise asymptotic bounds have been available for a long time \cite{BLUMENTHAL_GETOOR_1960,POLYA_1923}. The exact form of the Harnack bound \eqref{FLY} sheds light on the way in which the relative position of the two space-time points manifests itself in the estimate. A generalisation of this result to other powers of the operator and arbitrary spatial dimension will be addressed in a separate paper.          
\subsection*{Acknowledgements}
This research was partially supported by the Polish National Science Center grant SONATA BIS no.\,2020/38/E/ST1/00596.

The second author also acknowledges the support of the NAWA Bekker Scholarship Programme BPN/BEK/2021/1/00277.   
\bibliographystyle{plain} 
\bibliography{biblio} 
 \end{document}